\documentclass[11pt]{article}
\usepackage[T1]{fontenc}
\usepackage[utf8]{inputenc}
\usepackage{lmodern}
\usepackage{amsmath,amssymb,amsthm}
\usepackage{booktabs}
\usepackage{geometry}
\usepackage{hyperref}
\usepackage{microtype}
\usepackage{xcolor}
\hypersetup{colorlinks=true,linkcolor=blue!55!black,citecolor=blue!55!black,urlcolor=blue!55!black}

\newtheorem{theorem}{Theorem}
\newtheorem{corollary}[theorem]{Corollary}
\newtheorem{lemma}[theorem]{Lemma}

\newcommand{\F}{\mathcal F}
\newcommand{\E}{\mathcal E}
\newcommand{\Oset}{\mathcal O}

\title{A 3-semi-perfect 1-factorization of the six-dimensional hypercube}
\author{Guillaume Lambard\\{}Data-driven Materials Design Group,\\{}Center for Basic Research on Materials (CBRM),\\{}National Institute for Materials Science (NIMS),\\{}Ibaraki, Tsukuba, Namiki 1-1, 305-0044 Japan\\\texttt{LAMBARD.Guillaume@nims.go.jp}}
\date{16 July 2026}

\begin{document}
\maketitle

\begin{abstract}
For a 1-factorization $\mathcal F=\{M_1,\ldots,M_d\}$ of the hypercube $Q_d$, let
$G[\mathcal F]$ have vertex set $\mathcal F$, with $M_iM_j$ an edge exactly when
$M_i\cup M_j$ is a Hamilton cycle. Behague proved that $Q_{k+\ell}$ has a
1-factorization $\mathcal F$ with $G[\mathcal F]\cong K_{k,\ell}$ for all positive
$k,\ell$ except possibly $k=\ell=3$. We give an explicit 1-factorization of $Q_6$
for which $G[\mathcal F]\cong K_{3,3}$, resolving the exceptional case. The
construction is supplied as a finite certificate. Its correctness can be checked
directly from the tables in this paper or by either of two independent, short
standard-library verifiers supplied with the certificate.
\end{abstract}

\noindent\textbf{Keywords:} factorization; hypercube; Hamilton cycle; semi-perfect
1-factorization; computer-assisted proof.

\noindent\textbf{2020 Mathematics Subject Classification:} 05C70, 05C45.

\section{Introduction}

Let $Q_d$ be the graph with vertex set $\{0,1\}^d$, in which two vertices are
adjacent when they differ in exactly one coordinate. A \emph{1-factorization} of a
$d$-regular graph is a partition of its edge set into $d$ perfect matchings. Given a
1-factorization $\F=\{M_1,\ldots,M_d\}$ of $Q_d$, define its Hamiltonian-pair graph
$G[\F]$ on vertex set $\F$ by
\[
 M_iM_j\in E(G[\F])
 \quad\Longleftrightarrow\quad
 M_i\cup M_j\text{ is a Hamilton cycle of }Q_d.
\]
The factorization is $k$-semi-perfect when $G[\F]$ contains a copy of
$K_{k,d-k}$ with the prescribed two parts.

Behague proved that $G[\F]$ is bipartite for every 1-factorization of a hypercube
and constructed a factorization satisfying
$G[\F]\cong K_{k,\ell}$ in $Q_{k+\ell}$ for every pair of positive integers
$(k,\ell)\ne(3,3)$ \cite{behague2019}. The case $G[\F]\cong K_{3,3}$ in $Q_6$
was left as the sole exception. The construction below closes that exception.

\begin{theorem}\label{thm:main}
The six-dimensional hypercube has a 1-factorization $\F$ such that
$G[\F]\cong K_{3,3}$. In particular, $Q_6$ has a 3-semi-perfect
1-factorization.
\end{theorem}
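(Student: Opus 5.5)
The proof will be by explicit construction followed by a finite verification. I would build $\F=\{M_1,\dots,M_6\}$ as a list of six perfect matchings of $Q_6$, each with $32$ edges. The claim then reduces to checks that can be carried out mechanically.
\begin{enumerate}
\item Each $M_i$ is a perfect matching: every one of the $64$ vertices is covered exactly once.
\item The $M_i$ partition $E(Q_6)$: the $6\cdot 32=192$ edges are all distinct, and each is a genuine hypercube edge.
\item For $i\in\{1,2,3\}$ and $j\in\{4,5,6\}$, the union $M_i\cup M_j$ is a single $64$-cycle.
\item For the remaining pairs, with $i,j$ in the same part, $M_i\cup M_j$ splits into at least two cycles.
\end{enumerate}
By Behague's theorem $G[\F]$ is bipartite for every 1-factorization. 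So once the nine cross pairs are shown Hamiltonian, the graph $G[\F]$ contains $K_{3,3}$, and bipartiteness forces equality. Hence check~4 is automatically satisfied, though I would still verify it as a sanity check.

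To find the matchings I would use a symmetric search rather than a blind one. I would fix an automorphism group $\Gamma$ of $Q_6$, for example translations by a subgroup of $\mathbb Z_2^6$ combined with a coordinate permutation of order $3$. I would then look for a factorization in which $\Gamma$ permutes the $M_i$, say cycling $M_1\to M_2\to M_3$ and $M_4\to M_5\to M_6$. With this symmetry, only the three pairs $M_1M_4$, $M_1M_5$, $M_1M_6$ need to be made Hamiltonian; the other six follow by symmetry. Each $M_i$ is then determined by its choices on $\Gamma$-orbits of vertices. I would encode the problem as a SAT or exact-cover instance with three kinds of constraint:
\begin{enumerate}
\item every vertex has degree one in each colour;
\item every edge receives exactly one colour;
\item every two-coloured union across the bipartition is a single cycle.
\end{enumerate}

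The connectivity condition in the third item is not local, so I would handle it by lazy constraint generation. I would solve the local constraints, and then, whenever some cross union $M_i\cup M_j$ splits into several cycles, add a clause forbidding that particular short alternating cycle, and resolve. Behague's constructions for the other values of $(k,\ell)$ may suggest a good starting template, for example modifying a product-type factorization of $Q_3\times Q_3$ by switching alternating 4-cycles.

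The main obstacle I expect is this search step. The $(3,3)$ case is exceptional precisely because the inductive and product methods break down there, so a solution may be rare. If the symmetric ansatz has no solution, I would shrink $\Gamma$ to weaken the symmetry, or run local search by repeatedly switching alternating $4$-cycles while minimising the total number of cycles in the nine cross unions.

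Once a solution is found, the proof itself is simply the tabulated matchings together with the routine verification of checks 1 to 3. That verification can be done by hand from the tables or by short independent verifiers, which is how the paper presents its certificate.
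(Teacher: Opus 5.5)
\paragraph{The gap.} You never produce the factorization, and for this theorem the factorization is the whole proof. Theorem~\ref{thm:main} is a bare existence statement for the single case left open by Behague's constructions, so all of its mathematical content lies in exhibiting a witness. What you give instead is a search strategy: a symmetric SAT/exact-cover encoding with lazily added connectivity clauses, with local search as a fallback. You also concede that the symmetric ansatz might have no solution and that solutions may be rare. Nothing in the proposal shows that any of these procedures succeeds. As written, it proves only the triviality that a factorization, once found, could be checked.

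To close the gap you must write down concrete matchings $M_1,\dots,M_6$, together with data certifying that the nine cross unions are Hamilton cycles. This is exactly what the paper does. It lists the six matchings in Appendix~\ref{app:witness} and each $\pi_{j,i}=m_j^{-1}m_i$ as an explicit 32-cycle in Appendix~\ref{app:cycles}.

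\paragraph{What is sound.} Your checks 1--3 are the paper's checks. Your treatment of the within-part pairs is a valid shortcut. If $G[\F]$ contains the $K_{3,3}$ with parts $\{M_1,M_2,M_3\}$ and $\{M_4,M_5,M_6\}$, any further edge would close a triangle with a vertex of the opposite part, which bipartiteness forbids.

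Bipartiteness is elementary here. A 32-cycle is an odd permutation. For any fixed bijection $r:\E\to\Oset$ one has $\mathrm{sgn}(m_j^{-1}m_i)=\mathrm{sgn}(r^{-1}m_i)\,\mathrm{sgn}(r^{-1}m_j)$. Hence Hamiltonian pairs always join matchings of opposite sign. The paper instead exhibits an explicit alternating cycle for each of the six nonedges, which keeps the certificate independent of Behague's theorem; either route is fine.

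Your discovery plan is also close to what the author actually did. A SAT encoding with lazily added connectivity cuts produced a near-witness with eight of the nine cross pairs Hamiltonian. The last pair was repaired by a beam search over Kempe interchanges on arbitrary two-coloured components; your alternating 4-cycle switches are a special case of these. But that process is not part of the proof; only its output is, and your proposal lacks that output.
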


Combining Theorem~\ref{thm:main} with Behague's theorem gives the complete
existence statement.

\begin{corollary}
For every pair of positive integers $k$ and $\ell$, the hypercube $Q_{k+\ell}$
has a 1-factorization $\F$ for which $G[\F]\cong K_{k,\ell}$.
\end{corollary}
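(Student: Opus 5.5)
The statement that immediately precedes this point is the Corollary, and it follows formally. Fix positive integers $k$ and $\ell$. If $(k,\ell)\ne(3,3)$, Behague's theorem \cite{behague2019} directly supplies a 1-factorization $\F$ of $Q_{k+\ell}$ with $G[\F]\cong K_{k,\ell}$. If $(k,\ell)=(3,3)$, Theorem~\ref{thm:main} supplies a 1-factorization of $Q_6$ with $G[\F]\cong K_{3,3}$. These two cases exhaust all pairs, so the Corollary holds once Theorem~\ref{thm:main} is established. All the real work is therefore in Theorem~\ref{thm:main}, and the plan below is aimed at it.

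I would prove Theorem~\ref{thm:main} by exhibiting an explicit certificate and then verifying it. The certificate lists six perfect matchings $M_1,\dots,M_6$ of $Q_6$, each as a table assigning to every vertex $v\in\{0,1\}^6$ the coordinate direction of its $M_i$-edge. The tables are split into a side $A=\{M_1,M_2,M_3\}$ and a side $B=\{M_4,M_5,M_6\}$. The verification has three steps.
\begin{enumerate}
\item Check that each $M_i$ is a perfect matching. If $v$ is assigned direction $c$, then the neighbour $v\oplus e_c$ must also be assigned $c$.
\item Check that the six matchings partition $E(Q_6)$. Equivalently, at each vertex the six assigned directions form a permutation of $\{1,\dots,6\}$.
\item Check the Hamiltonian-pair graph. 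For every $i\in A$ and $j\in B$, the union $M_i\cup M_j$ must be a single $64$-cycle; this is done by walking the alternating cycle from $0^6$ and confirming it has length $64$. For every pair inside $A$, and every pair inside $B$, the union must split into at least two cycles.
\end{enumerate}
Behague's result that $G[\F]$ is always bipartite makes the third step partly redundant: the same-side pairs could in principle be left unchecked. Even so, I would check them directly, because $G[\F]\cong K_{3,3}$ requires the \emph{absence} of those edges and a direct check removes any doubt.

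The hard part is finding the certificate, not checking it. For the search I would impose symmetry to shrink the space. One option is to require invariance under a subgroup of $\mathrm{Aut}(Q_6)$, such as translations by a subspace of $\mathbb{Z}_2^6$ together with a coordinate permutation that swaps $A$ and $B$. Another is to start from Behague's product-type constructions for neighbouring parameters, such as $K_{2,3}$ in $Q_5$ or $K_{3,2}$, and extend or modify them.

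Concretely, I would encode a factorization as a proper 6-edge-colouring of $Q_6$ and run a local search or SAT-guided search. The objective is to maximise the number of Hamiltonian cross pairs $(i,j)$ with $i\in A$, $j\in B$. Moves would be alternating-cycle (Kempe-type) switches between two colour classes, which preserve the property of being a 1-factorization.

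The main obstacle is making all nine cross unions Hamiltonian at once. A switch that merges cycles for one cross pair typically splits them for another, and the fact that Behague's general methods fail exactly at $(3,3)$ suggests there is no easy structural route. My first attempt would be symmetric ansätze: impose invariance under a group of order 8 to 16 acting freely on vertices. This reduces the problem to a small quotient graph, and on that quotient exhaustive or SAT search becomes feasible. Only if that fails would I fall back to unrestricted randomized switching.

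Once a certificate is found, the proof of Theorem~\ref{thm:main} consists of presenting the tables and the three finite checks above, and the Corollary then follows by the case split in the first paragraph.
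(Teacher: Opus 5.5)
Your proof of the Corollary is correct and matches the paper's argument: Behague's theorem handles every $(k,\ell)\ne(3,3)$ and Theorem~\ref{thm:main} handles $(3,3)$. The search plan that follows is not needed here, because Theorem~\ref{thm:main} is already established in the paper by the explicit certificate in the appendices; note also that your plan, since it produces no actual certificate, would not by itself prove Theorem~\ref{thm:main}.
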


\section{A checkable certificate}

We identify a vertex $(x_0,\ldots,x_5)\in\{0,1\}^6$ with the integer
$\sum_{r=0}^5x_r2^r\in\{0,\ldots,63\}$. Thus $u$ and $v$ are adjacent exactly
when $u\mathbin{\mathtt{xor}}v\in\{1,2,4,8,16,32\}$. Appendix~\ref{app:witness}
lists six sets $M_1,\ldots,M_6$ of 32 edges. Endpoints are written as two-digit
hexadecimal numbers; for example, \texttt{0222} denotes the edge
$\{\mathtt{02},\mathtt{22}\}=\{2,34\}$.

Let $\E$ and $\Oset$ be the even- and odd-parity vertices of $Q_6$. A perfect
matching $M_i$ determines a bijection $m_i:\E\to\Oset$. For distinct $i,j$, put
\[
 \pi_{j,i}=m_j^{-1}m_i:\E\longrightarrow\E.
\]

\begin{lemma}\label{lem:perm}
The union $M_i\cup M_j$ is a Hamilton cycle of $Q_6$ if and only if
$\pi_{j,i}$ is a 32-cycle on $\E$.
\end{lemma}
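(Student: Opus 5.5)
The plan is to read the structure of $M_i\cup M_j$ directly off the cycle structure of $\pi_{j,i}$. First, since $Q_6$ is bipartite with parts $\E$ and $\Oset$, every edge of $M_i$ or $M_j$ joins a vertex of $\E$ to a vertex of $\Oset$. Hence $m_i$ and $m_j$ are well-defined bijections $\E\to\Oset$ (both parts have 32 vertices), and $\pi_{j,i}=m_j^{-1}m_i$ is a permutation of $\E$. Also, $M_i$ and $M_j$ are disjoint, because they are distinct classes of a 1-factorization. So $H=M_i\cup M_j$ is a 2-regular spanning subgraph of $Q_6$ whose edges alternate between $M_i$ and $M_j$. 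It is therefore a disjoint union of even cycles, each of length at least $4$.

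The key step is a correspondence between the cycles of $H$ and the cycles of $\pi_{j,i}$. Take $e\in\E$ and walk along $H$: leave $e$ by its $M_i$-edge to $m_i(e)\in\Oset$, then take the $M_j$-edge back to $m_j^{-1}(m_i(e))=\pi_{j,i}(e)\in\E$. Iterating this, the component of $H$ through $e$ visits the even vertices $e,\pi_{j,i}(e),\pi_{j,i}^2(e),\ldots$ in order. Each even vertex is followed by exactly one odd vertex, namely its $m_i$-image. The walk closes exactly when $\pi_{j,i}^t(e)=e$, and disjointness of the matchings ensures the walk never repeats a vertex before then. So a component of $H$ of length $2t$ corresponds bijectively to a $t$-cycle of $\pi_{j,i}$ (the orbit of $e$). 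Since $M_i\cap M_j=\emptyset$, we have $\pi_{j,i}(e)\neq e$, so $t\ge 2$, consistent with simplicity.

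It follows that $H$ is a single cycle through all 64 vertices, i.e.\ a Hamilton cycle, if and only if $\pi_{j,i}$ has a single orbit on $\E$, i.e.\ is a 32-cycle. Both directions come from the bijection between components of $H$ and orbits of $\pi_{j,i}$, together with the length relation $2t$ versus $t$.

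I expect no real obstacle here. The only point needing care is the claim that the alternating walk traces exactly one component and that the orbit length equals half the component length. This follows from $H$ being 2-regular, with each vertex having exactly one $M_i$-edge and one $M_j$-edge.
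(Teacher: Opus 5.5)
Your proposal is correct and matches the paper's proof: following the $M_i$-edge and then the $M_j$-edge from $e\in\E$ lands at $\pi_{j,i}(e)$, so $t$-cycles of $\pi_{j,i}$ correspond to alternating $2t$-cycles of $M_i\cup M_j$, and a single $64$-cycle corresponds to a single $32$-cycle. One small correction: the walk does not repeat a vertex before closing because $t$ is the orbit length and $m_i$ is injective, not because the matchings are disjoint; disjointness is needed only to rule out $t=1$, as you also note.
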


\begin{proof}
Starting at $u\in\E$ and following first the $M_i$-edge and then the $M_j$-edge
returns to $\E$ at $m_j^{-1}m_i(u)=\pi_{j,i}(u)$. Hence cycles of length $r$ in
$\pi_{j,i}$ correspond exactly to alternating cycles of length $2r$ in
$M_i\cup M_j$. The union is one cycle on all 64 vertices exactly when
$\pi_{j,i}$ is one cycle on all 32 even vertices.
\end{proof}

\begin{proof}[Proof of Theorem~\ref{thm:main}]
Use the six edge sets in Appendix~\ref{app:witness}. Each row contains 32 cube
edges and each vertex occurs exactly once in each row, so each row is a perfect
matching. The 192 displayed edges are distinct. Since
$|E(Q_6)|=6\cdot64/2=192$, the six matchings partition $E(Q_6)$ and form a
1-factorization.

Appendix~\ref{app:cycles} displays $\pi_{j,i}$ as a single cycle containing all
32 even vertices for every $i\in\{1,2,3\}$ and $j\in\{4,5,6\}$. Lemma
\ref{lem:perm} therefore shows that all nine cross-pairs are Hamiltonian.
For each of the six pairs within $\{1,2,3\}$ or within $\{4,5,6\}$,
Appendix~\ref{app:nonedges} exhibits a proper alternating cycle in the union.
None of those six unions is Hamiltonian. Consequently the Hamiltonian-pair
graph has precisely the nine cross-edges, and hence is $K_{3,3}$.
\end{proof}

\section{Independent verification and provenance}

The appendices constitute a self-contained finite proof certificate. For
convenience and error detection, the supplementary archive also contains the
same factorization in JSON and two independently written Python verifiers. Each
verifier checks directly that every listed pair is a cube edge, that every row is
a perfect matching, that the six rows partition all 192 edges, and that every
cross-pair union is a 64-cycle. Neither verifier searches for a factorization,
and neither imports the other. Both use only the Python standard library.

With the supplementary files in the working directory, the checks are
reproduced by
\begin{verbatim}
python3 verifier.py candidate.json
python3 verifier_cleanroom.py candidate.json
sha256sum -c SHA256SUMS
\end{verbatim}
The SHA-256 digest of the JSON certificate is
\begin{center}
\small\ttfamily
3d3a476aa481ac0e5a0fd51e3f4a677d8dadf5ec3c000ed0f972dee1982bf8db.
\end{center}

\paragraph{Discovery of the certificate.}
The certificate was obtained in two computational stages. First, a Boolean
constraint search represented a 1-factorization as a proper six-edge-colouring
of $Q_6$, imposed a sound orbit normalization and the necessary
factorization-sign condition, and added necessary connectivity cuts whenever a
required bichromatic union was disconnected. This produced a near-witness in
which eight of the nine required cross-pairs were Hamiltonian and the remaining
union had cycle lengths $8$, $12$, and $44$.

Starting from this near-witness, a seeded beam search applied Kempe-component
trades. For two colours $a$ and $b$, interchanging them on any connected
component of the subgraph formed by their colour classes preserves the
1-factorization. Allowing trades between all fifteen colour pairs, the frozen
replay found the displayed certificate at trade depth eight after visiting
$599{,}257$ distinct factorizations. A separate end-to-end rerun generated a
different eight-of-nine near-witness after $5{,}067$ constraint-solver rounds
and $89{,}540$ connectivity cuts, and repaired it to a second independently
accepted certificate at trade depth twelve. The programs, inputs, complete
logs, parameters, hashes, and both outputs are preserved in the accompanying
repository.

These computations were used only to discover the displayed object. They are
not logical dependencies of the proof: only the explicit certificate and the
checks above are used to establish Theorem~\ref{thm:main}.

The exact certificate, its discovery, and verifier versions used for this result are preserved in the immutable Zenodo archive \cite{lambard2026q6archive}.

\section*{Data and code availability}

The complete frozen reproducibility package, including the explicit factorization certificate, two independent verifiers, SHA-256 hashes, and
reproduction instructions, is permanently archived on Zenodo at
\href{https://doi.org/10.5281/zenodo.21392425} {doi:10.5281/zenodo.21392425} \cite{lambard2026q6archive}. The associated public development repository is available at \url{https://github.com/GLambard/q6-semi-perfect-factorization}.

\section*{Funding}

No specific funding was received for performing the present work.

\section*{Declaration of competing interests}

There are no competing interests to be declared. 

\section*{Declaration of generative AI and AI-assisted technologies in the
manuscript preparation process}

During the preparation of this work, the author used OpenAI Terra/Sol 5.6 (High) models to assist computational exploration, independent-code drafting, literature organization, and manuscript preparation. The author reviewed and verified the mathematical certificate, software outputs, references, and manuscript, and take full responsibility for the content of the work.

\appendix
\section{The six matchings}\label{app:witness}

In the following table each four-character hexadecimal word \texttt{uuvv}
denotes the unordered edge $\{\mathtt{uu},\mathtt{vv}\}$.

\begin{center}
\begin{minipage}{0.98\textwidth}
\small\ttfamily\raggedright
$M_1$: 0001, 0222, 0307, 0406, 0525, 0828, 090B, 0A2A,
0C0E, 0D1D, 0F2F, 1018, 1113, 1216, 1415, 1737,
1939, 1A3A, 1B1F, 1C1E, 2030, 2131, 2327, 242C,
262E, 292D, 2B3B, 3233, 3435, 363E, 383C, 3D3F.

\medskip
$M_2$: 0020, 0121, 0206, 030B, 040C, 0515, 0717, 0818,
0929, 0A1A, 0D0F, 0E1E, 1012, 1119, 131B, 1416,
1C3C, 1D1F, 2223, 2426, 2535, 272F, 2838, 2A2B,
2C2E, 2D3D, 3032, 3133, 3436, 373F, 393B, 3A3E.

\medskip
$M_3$: 0004, 0103, 020A, 050D, 0616, 0727, 080C, 0919,
0B1B, 0E0F, 1011, 1213, 1434, 1517, 181C, 1A1E,
1D3D, 1F3F, 2024, 2129, 2226, 2333, 252D, 282A,
2B2F, 2C3C, 2E3E, 3038, 3139, 3236, 3537, 3A3B.

\medskip
$M_4$: 0008, 0109, 0203, 0414, 0507, 060E, 0A0B, 0C0D,
0F1F, 1030, 1115, 121A, 1333, 1617, 1819, 1B3B,
1C1D, 1E3E, 2021, 2232, 232B, 2425, 2627, 2829,
2A2E, 2C2D, 2F3F, 3135, 343C, 3637, 383A, 393D.

\medskip
$M_5$: 0002, 0111, 0323, 0405, 0626, 070F, 080A, 090D,
0B2B, 0C1C, 0E2E, 1014, 1232, 1317, 1535, 1636,
1838, 191D, 1A1B, 1E1F, 2022, 2125, 2434, 2737,
282C, 2939, 2A3A, 2D2F, 3031, 333B, 3C3D, 3E3F.

\medskip
$M_6$: 0010, 0105, 0212, 0313, 0424, 0607, 0809, 0A0E,
0B0F, 0C2C, 0D2D, 1131, 141C, 151D, 161E, 171F,
181A, 191B, 2028, 2123, 222A, 2527, 2636, 292B,
2E2F, 3034, 323A, 3337, 353D, 3839, 3B3F, 3C3E.
\end{minipage}
\end{center}

\section{The nine Hamiltonian-pair certificates}\label{app:cycles}

Each row gives one-line cyclic notation for $\pi_{j,i}=m_j^{-1}m_i$.
Every row contains each even-parity vertex exactly once; the final arrow closes
the cycle at \texttt{00}.

\begin{center}
\begin{minipage}{0.98\textwidth}
\footnotesize\ttfamily\raggedright
$\pi_{4,1}$: 00 09 0A 2E 27 2B 1B 0F 3F 39 18 30 21 35 3C 3A
12 17 36 1E 1D 0C 06 14 11 33 22 03 05 24 2D 28 $\to$ 00.

\medskip
$\pi_{5,1}$: 00 11 17 27 03 0F 2D 39 1D 09 2B 33 12 36 3F 3C
18 14 35 24 28 0A 3A 1B 1E 0C 2E 06 05 21 30 22 $\to$ 00.

\medskip
$\pi_{6,1}$: 00 05 27 21 11 03 06 24 0C 0A 22 12 1E 14 1D 2D
2B 3F 35 30 28 09 0F 2E 36 3C 39 1B 17 33 3A 18 $\to$ 00.

\medskip
$\pi_{4,2}$: 00 21 09 28 3A 1E 06 03 0A 12 30 22 2B 2E 2D 39
1B 33 35 24 27 3F 36 3C 1D 0F 0C 14 17 05 11 18 $\to$ 00.

\medskip
$\pi_{5,2}$: 00 22 03 2B 3A 3F 27 2D 3C 0C 05 35 21 11 1D 1E
2E 28 18 0A 1B 17 0F 09 39 33 30 12 14 36 24 06 $\to$ 00.

\medskip
$\pi_{6,2}$: 00 28 39 3F 33 11 1B 03 0F 2D 35 27 2E 0C 24 36
30 3A 3C 14 1E 0A 18 09 2B 22 21 05 1D 17 06 12 $\to$ 00.

\medskip
$\pi_{4,3}$: 00 14 3C 2D 24 21 28 2E 1E 12 33 2B 3F 0F 06 17
11 30 3A 1B 0A 03 09 18 1D 39 35 36 22 27 05 0C $\to$ 00.

\medskip
$\pi_{5,3}$: 00 05 09 1D 3C 28 3A 33 03 11 14 24 22 06 36 12
17 35 27 0F 2E 3F 1E 1B 2B 2D 21 39 30 18 0C 0A $\to$ 00.

\medskip
$\pi_{6,3}$: 00 24 28 22 36 3A 3F 17 1D 35 33 21 2B 2E 3C 0C
09 1B 0F 0A 12 03 05 2D 27 06 1E 18 14 30 39 11 $\to$ 00.
\end{minipage}
\end{center}

\section{Certificates for the six nonedges}\label{app:nonedges}

For each within-part pair, the following table exhibits one proper alternating
cycle. Thus the corresponding union cannot be a Hamilton cycle.

\begin{center}
\begin{tabular}{cl}
\toprule
Pair & Proper cycle (hexadecimal vertices)\\
\midrule
$M_1,M_2$ & \texttt{24 2C 2E 26 $\to$ 24}\\
$M_1,M_3$ & \texttt{14 15 17 37 35 34 $\to$ 14}\\
$M_2,M_3$ & \texttt{1D 1F 3F 37 35 25 2D 3D $\to$ 1D}\\
$M_4,M_5$ & \texttt{00 08 0A 0B 2B 23 03 02 $\to$ 00}\\
$M_4,M_6$ & \texttt{0C 0D 2D 2C $\to$ 0C}\\
$M_5,M_6$ & \texttt{01 11 31 30 34 24 04 05 $\to$ 01}\\
\bottomrule
\end{tabular}
\end{center}

\bibliographystyle{plain}
\bibliography{references}

@article{behague2019,
  author  = {Behague, Natalie C.},
  title   = {Semi-perfect 1-factorizations of the hypercube},
  journal = {Discrete Mathematics},
  volume  = {342},
  number  = {6},
  pages   = {1696--1702},
  year    = {2019},
  doi     = {10.1016/j.disc.2019.01.035},
  note    = {\url{https://doi.org/10.1016/j.disc.2019.01.035}},
  eprint  = {1811.06389},
  archivePrefix = {arXiv},
  primaryClass  = {math.CO}
}

@misc{lambard2026q6archive,
  author       = {Lambard, Guillaume},
  title        = {Certificate and Verifiers for a 3-Semi-Perfect
                  1-Factorization of the Six-Dimensional Hypercube},
  year         = {2026},
  publisher    = {Zenodo},
  doi          = {10.5281/zenodo.21392425},
  howpublished = {\url{https://doi.org/10.5281/zenodo.21392425}},
  note         = {Source repository:
                  \url{https://github.com/GLambard/q6-semi-perfect-factorization}}
}

\end{document}